\newtheorem{defin}{Definition}
\newtheorem{theorem}{Theorem}
\newtheorem{lemma}{Lemma}
\newcommand{\p}[1]{\left(#1\right)}
\newcommand{\bk}[1]{\left[#1\right]}
\newcommand{\quot}[1]{``#1''}
\DeclareMathOperator{\modd}{mod}
\newcommand{\namehead}[3]{
\lstset{breaklines=true, morecomment=[l]{//}, frame=single, showstringspaces=false, numbers=left}
\begin{flushright}
Nathan Fox\\
#2\\
#3\\
\end{flushright}
\ifstrequal{#1}{.}{}{
\begin{center}
{\Large Homework #1}
\end{center}}
}
\newcommand{\seq}{\p}
\begin{document}
%
%
\title{Quasipolynomial Solutions to the Hofstadter Q-Recurrence}
\author{Nathan Fox\footnote{Department of Mathematics, Rutgers University, Piscataway, New Jersey,
\texttt{fox@math.rutgers.edu}
}}
\date{}

\maketitle

\begin{abstract}
In 1991, Solomon Golomb discovered a quasilinear solution to Hofstadter's $Q$-recurrence.  
In this paper, we construct eventual quasipolynomial solutions of all positive degrees to Hofstadter's recurrence.
\end{abstract}

\section{Introduction}
In the 1960s, Douglas Hofstadter introduced his $Q$ sequence~\cite[pp.\ 137-138]{geb}.  This sequence is defined by the recurrence $Q\p{n}=Q\p{n-Q\p{n-1}}+Q\p{n-Q\p{n-2}}$ along with the initial conditions $Q\p{1}=1$ and $Q\p{2}=1$.  Sequences defined in this way are often referred to as \emph{meta-Fibonacci sequences}~\cite{con}.  Though simple to define, this sequence appears to behave unpredictably.  To this day, it is even open whether this sequence is defined for all $n$.  It is conceivable that $Q\p{k}>k$ for some $k$, in which case $Q\p{k+1}$ would be undefined, as calculating it would refer to $Q$ of a nonpositive number.  Throughout this paper, though, we will use the convention that $Q\p{n}=0$ for $n\leq0$.  (We will call such an occurrence an \emph{underflow}.)  This may seem like cheating, but we could just as well replace the existence question about the $Q$ sequence by the equivalent question of whether $Q\p{n}\leq n$ for all $n$.  Other authors have also used this convention~\cite{rusk}.

In 1991, Golomb discovered a more predictable variation of Hofstadter's $Q$-sequence~\cite{golomb}.  He used the same recurrence, but, instead of the initial conditions $Q\p{1}=1$ and $Q\p{2}=1$, he used initial conditions $Q\p{1}=3$, $Q\p{2}=2$, and $Q\p{3}=1$.  This leads to a quasilinear sequence that can be described as follows:
\[
\begin{cases}
Q\p{3n}=3n-2\\
Q\p{3n+1}=3\\
Q\p{3n+2}=3n+2.
\end{cases}
\]
Given that one solution like this exists, it is conceivable that other related solutions exist.  In particular, under the aforementioned convention, it is plausible that Hofstadter's recurrence could have solutions where one equally-spaced subsequence grows quadratically.  This would occur if, for example, $Q\p{qn+r}$ equals $Q\p{qn+r-q}$ plus a linear polynomial, as the sequence $\seq{Q\p{qn+r}}_{n\geq1}$ would satisfy the recurrence $a_n=a_{n-1}+An+B$ for some $A$ and $B$.

In this paper, we show that quadratic solutions of this form do exist for Hofstadter's $Q$-recurrence.  In fact, we construct eventually-quasipolynomial solutions to the $Q$-recurrence of all positive degrees.
\section{The Construction}
First, we define the following:
\begin{defin}
Fix integers $d\geq1$ and $k\geq-1$.  Let
\[
p_{d,k}\p{n}=3d{{n+k}\choose{1+k}}+\sum_{i=1}^k\p{3i+2}{{n-1+k-i}\choose{k-i}}.
\]  
\end{defin}
Observe that $p_{d,k}$ is a polynomial in $n$ of degree $k+1$.  In particular, $p_{d,-1}=3d$, and $p_{d,0}=3dn$.  We will prove the following theorem:
\begin{theorem}\label{thm:main1}
Fix a degree $d\geq1$.  Define a sequence $\seq{a_m}_{m\geq1}$ as follows:
\[
a_{3dn+r}=\begin{cases}
3d-2 & 3dn+r=1\\
0 & 3dn+r=2\\
p_{d,\frac{r}{3}}\p{n} & 
r\equiv0\p{\modd 3}\\
3d & r\equiv1\p{\modd 3}\text{ and }3dn+r>2\\
3 & r\equiv2\p{\modd 3}\text{ and }r\neq3d-1\text{ and }3dn+r>2\\
2 & r=3d-1\text{ and }3dn+r>2
,
\end{cases}
\]
where $0\leq r<3d$ always.  Then, $\seq{a_m}$ satisfies the recurrence $Q\p{n}=Q\p{n-Q\p{n-1}}+Q\p{n-Q\p{n-2}}$ after an initial condition of length $3d+2$.
\end{theorem}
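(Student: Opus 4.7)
The plan is to verify the recurrence $a_m = a_{m-a_{m-1}} + a_{m-a_{m-2}}$ for every index $m > 3d+2$ by a case analysis on the residue $r$ of $m$ modulo $3d$, writing $m = 3dn + r$ with $0 \le r < 3d$. For each $r$, I compute $a_{m-1}$ and $a_{m-2}$ from the piecewise definition, identify the residue class of $m - a_{m-1}$ and $m - a_{m-2}$ modulo $3d$ (or detect an underflow, i.e., a nonpositive index), and verify the resulting sum equals $a_m$.

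The principal case is $r \equiv 0 \pmod{3}$, with $r = 3k$, so that $a_m = p_{d,k}(n)$. A direct inspection shows that $a_{m-1} \in \{2,3\}$ (coming from the residue $3k-1$ class, or the residue $3d-1$ class when $k=0$) and $a_{m-2} = 3d$ (since its residue is $3k-2 \equiv 1 \pmod{3}$). Substituting these small constants, the indices $m - a_{m-1}$ and $m - a_{m-2}$ land in residue classes $3(k-1)$ and $3k$ respectively (the latter with $n$ replaced by $n-1$), so the recurrence reduces to the polynomial identity
\[
p_{d,k}(n) = p_{d,k-1}(n) + p_{d,k}(n-1),
\]
where I adopt the convention $p_{d,-1}(n) = 3d$ to handle $k=0$. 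This identity follows from the Pascal rule $\binom{n+j}{j+1} - \binom{n+j-1}{j+1} = \binom{n+j-1}{j}$ applied termwise, with the vanishing boundary contribution $\binom{n-2}{-1} = 0$ reconciling the sum-index shift between $p_{d,k}$ and $p_{d,k-1}$.

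For the remaining residues, $a_m$ is one of the small constants $3d$, $3$, or $2$, and exactly one of $a_{m-1}, a_{m-2}$ equals a polynomial value $p_{d,j}(n)$ (namely $a_{m-1}$ when $r \equiv 1 \pmod 3$ and $a_{m-2}$ when $r \equiv 2 \pmod 3$). Two subcases arise. When $j=0$ the subtraction produces exactly the index $1$ or $2$, and the carefully chosen initial values $a_1 = 3d-2$ and $a_2 = 0$ provide the correct contribution. When $j \ge 1$, a short estimate on the leading binomial term shows $p_{d,j}(n) \ge m$ for all $n \ge 1$, so the underflow convention returns $0$ and the other summand (a small constant from a neighboring residue class) supplies $a_m$ on its own.

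The main obstacle is the combinatorial bookkeeping: for each of the $3d$ residue classes I must track the residues of $m - a_{m-1}$ and $m - a_{m-2}$ carefully (sometimes requiring $n$ to be decremented by one), verify that the underflow or landing at index $1$ or $2$ occurs exactly when expected, and confirm that the length-$(3d+2)$ initial segment is long enough for every reference the recurrence makes at its earliest instances. Once the case analysis is organized by $r \bmod 3$, each individual verification is a short calculation, and the only substantive ingredient is the binomial identity for the polynomial case.
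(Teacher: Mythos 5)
Your proposal follows essentially the same route as the paper: a residue-class analysis mod $3$, the Pascal-type identity $p_{d,k}(n)=p_{d,k-1}(n)+p_{d,k}(n-1)$ (with the convention $p_{d,-1}=3d$) for the polynomial case, and a lower bound on $p_{d,j}$ forcing an underflow in the remaining two cases. The only thin spot is the claim that an estimate on the leading binomial term gives $p_{d,j}(n)\ge m$; for small $n$ the leading term alone is not enough and you also need the $i=j$ term of the sum (contributing $3j+2$), which is precisely what the paper's Lemma~\ref{lem:pg} supplies.
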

We will use the following lemmas:
\begin{lemma}\label{lem:ppascal}
For all integers $d\geq1$ and $k\geq0$ we have $p_{d,k}\p{n}=p_{d,k-1}\p{n}+p_{d,k}\p{n-1}$.
\end{lemma}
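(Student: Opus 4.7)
The plan is to compute the difference $p_{d,k}(n) - p_{d,k}(n-1)$ directly and verify that it equals $p_{d,k-1}(n)$. The only ingredient will be Pascal's rule $\binom{m}{j} = \binom{m-1}{j} + \binom{m-1}{j-1}$, rewritten as
\[
\binom{m}{j} - \binom{m-1}{j} = \binom{m-1}{j-1},
\]
applied termwise to the closed form of $p_{d,k}$.

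First I would handle the degenerate case $k=0$ separately: here the sum in the definition of $p_{d,k}$ is empty, so the claim reduces to $3dn = 3d(n-1) + 3d$, which is immediate (with $p_{d,-1}=3d$). For $k\ge 1$ I would subtract the defining formulas and push the difference inside both the leading term and the sum, obtaining
\[
p_{d,k}(n) - p_{d,k}(n-1) = 3d\left[\binom{n+k}{k+1}-\binom{n+k-1}{k+1}\right] + \sum_{i=1}^{k}(3i+2)\left[\binom{n-1+k-i}{k-i}-\binom{n-2+k-i}{k-i}\right].
\]

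Next I would apply the Pascal identity to each bracket. The leading bracket collapses to $\binom{n+k-1}{k}$, which matches the leading term of $p_{d,k-1}(n)$ exactly. Each bracket inside the sum collapses to $\binom{n-2+k-i}{k-i-1}$. The $i=k$ term of this new sum is $(3k+2)\binom{n-2}{-1}=0$, so I may restrict to $1\le i\le k-1$. Finally, rewriting $n-2+k-i = n-1+(k-1)-i$ and $k-i-1 = (k-1)-i$ identifies the remaining sum with the sum appearing in the definition of $p_{d,k-1}(n)$, completing the verification.

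There is no real obstacle here: the lemma is essentially Pascal's rule applied termwise, and the only subtlety is noting that the extra $i=k$ term drops out so that the index range shifts correctly from $k$ to $k-1$. I would present the calculation as one aligned block of equalities, flagging the vanishing of the $i=k$ summand explicitly.
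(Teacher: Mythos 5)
Your proof is correct and is essentially the paper's argument: both apply Pascal's identity termwise to the closed form of $p_{d,k}$, the only cosmetic difference being that you verify $p_{d,k}(n)-p_{d,k}(n-1)=p_{d,k-1}(n)$ while the paper adds $p_{d,k-1}(n)+p_{d,k}(n-1)$ and collects terms (so the boundary term appears for you as a vanishing $\binom{n-2}{-1}$ rather than as the surviving constant $(3k+2)\binom{n-2}{0}$). No changes needed.
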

\begin{proof}
We have
\begin{align*}
p_{d,k-1}\p{n}+p_{d,k}\p{n-1}&=3d{{n+k-1}\choose{k}}+\sum_{i=1}^{k-1}\p{3i+2}{{n-2+k-i}\choose{k-i-1}}\\
&\hspace{0.2in}+3d{{n+k-1}\choose{1+k}}+\sum_{i=1}^k\p{3i+2}{{n-2+k-i}\choose{k-i}}\\
&=3d\p{{{n+k-1}\choose k}+{{n+k-1}\choose{1+k}}}\\
&\hspace{0.2in}+\sum_{i=1}^{k-1}\p{3i+2}\p{{{n-2+k-i}\choose{k-i-1}}+{{n-2+k-i}\choose{k-i}}}\\
&\hspace{0.2in}+\p{3k+2}{{n-2}\choose0}.
\end{align*}
Applying Pascal's Identity yields
\begin{align*}
p_{d,k-1}\p{n}+p_{d,k}\p{n-1}&=3d{{n+k}\choose{1+k}}+\sum_{i=1}^{k-1}\p{3i-2}{{n-1+k-i}\choose{k-i}}+\p{3k-2}\\
&=3d{{n+k}\choose{1+k}}+\sum_{i=1}^{k}\p{3i-2}{{n-1+k-i}\choose{k-i}}\\
&=p_{d,k}\p{n},
\end{align*}
as required.
\end{proof}
\begin{lemma}\label{lem:pg}
For all integers $d\geq1$, $k\geq1$, and $n\geq0$ we have
\[
p_{d,k}\p{n}\geq3dn+3k+2.
\]
\end{lemma}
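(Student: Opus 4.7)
The plan is to induct on $k$, using Lemma~\ref{lem:ppascal} as the main tool. Iterating the identity $p_{d,k}(n) = p_{d,k-1}(n) + p_{d,k}(n-1)$ from $n$ down to $0$ yields
\[
p_{d,k}(n) = p_{d,k}(0) + \sum_{j=1}^{n} p_{d,k-1}(j),
\]
so the problem reduces to evaluating $p_{d,k}(0)$ and invoking the inductive hypothesis on $p_{d,k-1}$.

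A short direct calculation handles $p_{d,k}(0)$. In the defining formula, $\binom{n+k}{k+1}$ vanishes at $n=0$, as does $\binom{n-1+k-i}{k-i}$ for $1\leq i<k$; the $i=k$ term contributes $(3k+2)\binom{-1}{0}=3k+2$, where the binomials are treated as polynomials in $n$ so that $\binom{-1}{0}=1$. Thus $p_{d,k}(0)=3k+2$, matching the desired bound with equality at $n=0$.

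For the base case $k=1$, I plug $p_{d,0}(j)=3dj$ (noted in the paper) into the telescoped formula to get $p_{d,1}(n)=5+3d\cdot n(n+1)/2$, and the bound $p_{d,1}(n)\geq 3dn+5$ follows from $n(n+1)/2\geq n$. For the inductive step $k\geq 2$, substituting $p_{d,k-1}(j)\geq 3dj+3k-1$ into the sum yields
\[
p_{d,k}(n) \geq (3k+2) + 3d\,\frac{n(n+1)}{2} + n(3k-1),
\]
and the desired inequality $p_{d,k}(n)\geq 3dn+3k+2$ reduces to $3d\cdot n(n-1)/2 + n(3k-1)\geq 0$, which is immediate for $n\geq 0$ and $d,k\geq 1$. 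The main obstacle is really just bookkeeping rather than any analytic difficulty; the one mildly subtle point is justifying $p_{d,k}(0)=3k+2$ via the polynomial interpretation of the binomials, which is consistent with how Lemma~\ref{lem:ppascal} is used in the identity above.
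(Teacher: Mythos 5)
Your proof is correct, but it takes a genuinely different route from the paper. You prove the bound by induction on $k$, telescoping the identity of Lemma~\ref{lem:ppascal} into $p_{d,k}\p{n}=p_{d,k}\p{0}+\sum_{j=1}^{n}p_{d,k-1}\p{j}$ and feeding the inductive hypothesis into the sum; the paper instead evaluates both $p_{d,k}\p{0}=3k+2$ and $p_{d,k}\p{1}=\frac{3}{2}k^2+\frac{7}{2}k+3d$ directly from the definition, checks the bound at those two points, and then extends it to all $n$ by a convexity argument (the polynomial has nonnegative coefficients, so its slope for $n>1$ exceeds the average slope $\geq 3d$ on $\bk{0,1}$). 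Both proofs hinge on the same computation $p_{d,k}\p{0}=3k+2$, which is where the constant $3k+2$ in the bound comes from. Your version buys a purely arithmetic, calculus-free argument that reuses the structural recurrence already established, at the cost of an induction on $k$; the paper's version avoids induction but leans on a slightly informal derivative/convexity step. One small point worth making explicit in your write-up: the final inequality $3d\,n\p{n-1}/2+n\p{3k-1}\geq0$ uses that $n$ is a nonnegative \emph{integer} (so $n\p{n-1}\geq0$), which is all the lemma requires but would fail for real $n\in\p{0,1}$.
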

\begin{proof}
First, we observe that
\[
p_{d,k}\p{0}=3d{k\choose{1+k}}+\sum_{i=1}^k\p{3i+2}{{k-i-1}\choose{k-i}}.
\]
All of these binomial coefficients are zero, except when $i=k$, since ${-1\choose0}=1$.  So, $p_{d,k}\p{0}=3k+2$.  This equals $3dn+3k+2$, and hence is greater than or equal to it, as required.

Now,
\begin{align*}
p_{d,k}\p{1}&=3d{{1+k}\choose{1+k}}+\sum_{i=1}^k\p{3i+2}{{k-i}\choose{k-i}}\\
&=3d+\sum_{i=1}^k\p{3i+2}\\
&=3d+3\p{\frac{k^2+k}{2}}+2k\\
&=\frac{3}{2}k^2+\frac{7}{2}k+3d.
\end{align*}
So,
\begin{align*}
p_{d,k}\p{1}-3d\cdot1+3k+2&=\frac{3}{2}k^2+\frac{7}{2}k+3d-3d-3k-2\\
&=\frac{3}{2}k^2+\frac{1}{2}k-2\\
&=\frac{\p{3k+4}\p{k-1}}{2}.
\end{align*}
This is greater than or equal to $0$, since $k\geq1$.  So, $p_{d,k}\p{1}\geq3d+3k+2$, as required.

Now, observe that $p_{d,k}$ has nonnegative coefficients, so it is convex.  We have seen that its average slope on the interval $\bk{0,1}$ is at least $3d$, so its derivative for $n>1$ must be strictly larger than $3d$ everywhere.  Therefore, since $p_{d,k}\p{1}\geq3d+3k+2$, we can conclude that $p_{d,k}\p{n}\geq3dn+3k+2$ for all $n\geq0$.
\end{proof}

We will now prove Theorem~\ref{thm:main1}.
\begin{proof}
We will check the three congruence classes mod $3$ separately for $m>3d+2$.  As usual, $m=3dn+r$ for $0\leq r<3d$.  We will proceed by induction, so in each case we will assume that all previous values of the sequence are what they should be.  Also, in all cases, since $m>3d+2$, $m-3d>2$.  (This will come up when deciding whether or not we are in the special initial conditions for the first two values.)
\begin{description}
\item[$r\equiv0\p{\modd3}$:] Assume $r\equiv0\p{\modd3}$.  Then, $m=3dn+r$ for some $n$.  For convenience, let $\ell=\frac{r}{3}$.  We wish to show that $Q\p{3dn+r}=p_{d,\ell}\p{n}$.  Let $c=2$ if $r=0$; otherwise, let $c=3$.  We have,
\begin{align*}
Q\p{3dn+r}&=Q\p{3dn+r-Q\p{3dn+r-1}}+Q\p{3dn+r-Q\p{3dn+r-2}}\\
&=Q\p{3dn+r-c}+Q\p{3dn+r-3d}\\
&=Q\p{3dn+r-c}+Q\p{3d\p{n-1}+r}\\
&=Q\p{3dn+r-c}+p_{d,\ell}\p{n-1}.
\end{align*}
If $r=0$, then $\ell=0$ and
\[
Q\p{3dn+r-c}=Q\p{3dn+r-2}=3d=p_{d,\ell-1}\p{n}.
\]
If $r\neq0$, then $\ell>0$ and
\[
Q\p{3dn+r-c}=Q\p{3dn+r-3}=p_{d,\ell-1}\p{n}.
\]
In either case, we have
\[
Q\p{3dn+r}=p_{d,\ell-1}\p{n}+p_{d,\ell}\p{n-1}.
\]
By Lemma~\ref{lem:ppascal}, this equals $p_{d,\ell}\p{n}$, as required.
\item[$r\equiv1\p{\modd3}$:] Assume $r\equiv1\p{\modd3}$.  Then, $m=3dn+r$ for some $n$.  We wish to show that $Q\p{3dn+r}=3d$. For convenience, let $\ell=\frac{r-1}{3}$. We have,
\begin{align*}
Q\p{3dn+r}&=Q\p{3dn+r-Q\p{3dn+r-1}}+Q\p{3dn+r-Q\p{3dn+r-2}}\\
&=Q\p{3dn+r-p_{d,\ell}\p{n}}+Q\p{3dn+r-Q\p{3dn+r-2}}.
\end{align*}
If $\ell=0$, then $p_{d,\ell}\p{n}=3dn$ and $r=1$.  So, in that case, $3dn+r-p_{d,\ell}\p{n}=r=1$.  Also, in that case $Q\p{3dn+r-2}=2$, so
\[
Q\p{3dn+r-Q\p{3dn+r-2}}=Q\p{3dn+r-2}=2.
\]
Since $Q\p{1}=3d-2$, we obtain $Q\p{3dn+r}=3d-2+2=3d$ in the case when $r=1$.  

Otherwise, we have $\ell\geq1$.  In that case,
$p_{d,\ell}\p{n}\geq3dn+3\ell+2$ by Lemma~\ref{lem:pg}.  But, $3\ell+2=r+1$ so,
$
3dn+r-p_{d-1}\p{n}\leq-1.
$
This causes the first term to underflow,
so $Q\p{3dn+r-p_{d,\ell}\p{n}}=0$.  Hence, $Q\p{3dn+r}=Q\p{3dn+r-Q\p{3dn+r-2}}$.  In this case, we know $r\neq1$, so $Q\p{3dn+r-2}=3$.  This means that
\[
Q\p{3dn+r-Q\p{3dn+r-2}}=Q\p{3dn+r-3}=3d.
\]
So, $Q\p{3dn+r}=3d$, as required.
\item[$r\equiv2\p{\modd3}$:] Assume $r\equiv2\p{\modd3}$.  Then, $m=3dn+r$ for some $n$.  Let $c=2$ if $r=3d-1$; otherwise, let $c=3$.  We wish to show that $Q\p{3dn+r}=c$. For convenience, let $\ell=\frac{r-2}{3}$. We have,
\begin{align*}
Q\p{3dn+r}&=Q\p{3dn+r-Q\p{3dn+r-1}}+Q\p{3dn+r-Q\p{3dn+r-2}}\\
&=Q\p{3dn+r-3d}+Q\p{3dn+r-p_{d,\ell}\p{n}}\\
&=Q\p{3d\p{n-1}+r}+Q\p{3dn+r-p_{d,\ell}\p{n}}\\
&=c+Q\p{3dn+r-p_{d,\ell}\p{n}}.
\end{align*}
If $\ell=0$, then $p_{d,\ell}\p{n}=3dn$ and $r=2$.  So, in that case, $3dn+r-p_{d,\ell}\p{n}=r=2$.  Since $Q\p{2}=0$, we obtain $Q\p{3dn+r}=c$ in the case when $r=2$.  

Otherwise, we have $\ell\geq1$.  In that case,
$p_{d,\ell}\p{n}\geq3dn+3\ell+2$ by Lemma~\ref{lem:pg}.  But, $3\ell+2=r$ so,
$
3dn+r-p_{d-1}\p{n}\leq0,
$
an underflow in the second term.  This implies that $Q\p{3dn+r-p_{d,\ell}\p{n}}=0$, so $Q\p{3dn+r}=c$, as required.
\end{description}
\end{proof}
Note that the only place we used the $3i+2$ in the definition of $p_{d,k}\p{n}$ was to obtain the lower bound of $r+2$ on the polynomials.  So, $3i+2$ could be replaced by any larger expression, and the proof would still go through.  Also, observe that this construction is not a direct generalization of Golomb's construction, as the $d=1$ case has two constant pieces and one linear piece, unlike Golomb's, which has one constant piece and two linear pieces.  Also, Golomb's example is \emph{purely} quasilinear, whereas our $d=1$ example is only eventually quasilinear.  It is unknown whether there exist purely quasipolynomial solutions to the Hofstadter $Q$-recurrence of degrees greater than $1$.
\subsection{An Example}
As an example of our construction, we will construct a solution to Hofstadter's recurrence 
with a cubic subsequence.  To do this, we set $d=3$, which means that the sequence values will depend on the congruence class mod $9$ of the index.  We observe that
\begin{align*}
p_{3,0}&=9n\\
p_{3,1}&=9{{n+1}\choose2}+5{{n-1}\choose{0}}=\frac{9}{2}n\p{n+1}+5\\
&=\frac{9}{2}n^2+\frac{9}{2}n+5\\
p_{3,2}&=9{{n+2}\choose3}+5{{n}\choose{1}}+8{{n-1}\choose{0}}=\frac{9}{6}n\p{n+1}\p{n+2}+5n+8\\&=\frac{3}{2}n^3+\frac{9}{2}n^2+8n+8.
\end{align*}
So, our sequence is defined by
$a_1=7$, $a_2=0$, and for $9n+r>2$,
\[
a_{9n+r}=\begin{cases}
9n & r=0\\
9 &  r=1\\
3 &  r=2\\
\frac{9}{2}n^2+\frac{9}{2}n+5 &  r=3\\
9 &  r=4\\
3 &  r=5\\
\frac{3}{2}n^3+\frac{9}{2}n^2+8n+8 &  r=6\\
9 &  r=7\\
2 &  r=8.
\end{cases}
\]
After the initial condition $7,0,5,9,3,8,9,2,9,9,3$, repeated applications of the Hofstadter $Q$-recurrence produce the sequence
\[
7, 0, 5, 9, 3, 8, 9, 2, 9, 9, 3, 14, 9, 3, 22, 9, 2, 18, 9, 3, 32, 9, 3, 54, 9, 2, 27, 9, 3, 59, 9, 3, 113, 9, 2,\ldots
\]

\begin{bibdiv}
\begin{biblist}
\bib{con}{misc}
{
author={Conolly, B.W.},
title={Meta-Fibonacci sequences, Chapter XII in S. Vajda, Fibonacci \& Lucas Numbers, and the Golden Section},
year={1989},
publisher={Ellis Horwood Limited}
}
\bib{golomb}{misc}
{
author={Golomb, S.W.},
title={Discrete Chaos: Sequences Satisfying \quot{Strange} Recursions},
year={1991},
publisher={unpublished manuscript}
}
\bib{geb}{book}
{
 author = {Hofstadter, Douglas},
 title = {G\"odel, Escher, Bach: an Eternal Golden Braid}, 
 publisher = {Penguin Books},
 year = {1979}, 
 pages = {137}
}
\bib{rusk}{article}
{
author = {Ruskey, F.}
title = {Fibonacci Meets Hofstadter},
journal={The Fibonacci Quarterly},
volume= {49},
year = {2011},
number = {3},
pages = {227-230}
}
\end{biblist}
\end{bibdiv}
\end{document}